\shorttitle{Mean position of a branching L\'evy process}
\begin{document}

\title{The empirical mean position \\ of a branching L\'evy process} 

\authorone[Harvard University]{David Cheek}

\addressone{Program for Evolutionary Dynamics, Harvard University, Cambridge, Massachusetts, 
USA, dmcheek@g.harvard.edu}

\authortwo[Heriot-Watt University]{Seva Shneer} 

\addresstwo{School of MACS, Heriot-Watt University, Edinburgh, EH14 4AS, United Kingdom; v.shneer@hw.ac.uk} 

\begin{abstract}
We consider a supercritical branching L\'evy process on the real line. Under mild moment assumptions on the number of offspring and their displacements, we prove a second-order limit theorem on the empirical mean position.
\end{abstract}
\keywords{branching L\'evy process; branching random walk; branching Brownian motion; empirical mean position; empirical mean depth}
\ams{60J80}{60J25}

\section{Introduction}
A branching L\'evy process describes a population of particles undergoing spatial movement, death, and reproduction.  It can be defined informally as follows (for a formal definition, see Section \ref{sec:model}). Initially there is one particle located at the origin of the real line. The particle lives for an exponentially-distributed time. During this time it moves according to a L\'evy process. At the time of death, the particle is replaced by a random number of new particles, displaced from the parent particle's death position according to a point process. All particles move, die, and reproduce in a statistically identical manner, independently of every other particle. We are only concerned with the supercritical case. That is, each particle gives birth to more than one particle on average, and thus the total number of particles grows to infinity with positive probability.

The particle positions' empirical distribution has received much attention, especially for branching random walks and branching Brownian motion, which are special cases of the model. There are many results on the empirical distribution's maximum \cite{bramson1978maximal,biggins1995growth,gantert2018large}, as well as on large deviations \cite{louidor2017large} and on the almost-sure weak convergence to a Gaussian distribution \cite{biggins1990central,gao2018second}.

The empirical mean position, which is simple and important for applications, has received relatively little attention. For specific branching random walks, \cite{meli2019sample} shows that the empirical mean position almost surely grows asymptotically linearly with time, while \cite{duffy2019variance} shows that the empirical mean position's variance converges. These results combined raise the question of characterising a second-order limit term.

For branching L\'evy processes, under some mild moment assumptions on the number of offspring and their displacements, we prove a second-order limit theorem for the empirical mean position. Namely, we show that the difference between the empirical mean position at time $t$ and $rt$, for some constant $r$, converges almost surely to a random variable.

Before proceeding with the remainder of the paper, we discuss some special cases of the model and applications.

First, consider that particles do not move during their lifetime and that each particle is displaced by $+1$ from its parent. A particle's position is its generation. Our result describes the average generation, complementing results of \cite{meli2019sample,chauvin2005martingales}. Second, consider instead that displacement sizes are Poisson distributed. This is a popular model for cancer evolution \cite{durrett2013population}. Here particles are cells, and a cells' position is its number of mutations. Our result gives the average number of mutations per cell. Third, consider that particles are not displaced from their parent but move as a random walk during their lifetime. This model is seen in phylogenetics. The branching process represents speciation  \cite{aldous2001stochastic}, while the positions are lengths of a particular DNA segment \cite{felsenstein2004phylogenetics}.

The remainder of the paper is organised as follows. We  introduce the model in Section \ref{sec:model}, formulate our main result in Section \ref{sec:main} and prove it in Section \ref{sec:proof}.

\section{Model} \label{sec:model}
Initially there is a single particle named $\emptyset$ which moves according to a L\'evy process $(Z_{\emptyset,s})_{s\geq0}$, with $Z_{\emptyset,0}=0$ and $\mathbb{E}[Z_{\emptyset,1}^2]<\infty$. After an exponentially distributed waiting time $A_\emptyset$, the particle dies and is replaced by a random number $N_\emptyset$ of new particles with $\mathbb{E}[N_\emptyset]>1$ and $\mathbb{E}[N_\emptyset^2]<\infty$. The new particles are born at positions $(Z_{\emptyset,A_\emptyset}+D_i)_{i=1}^{N_\emptyset}$. The $D_i$ are $\mathbb{R}$-valued random variables with
$$\mathbb{E}\left[\left(\sum_{i=1}^{N_\emptyset}D_i\right)^2\right]<\infty\quad\text{and}\quad\mathbb{E}\left[\sum_{i=1}^{N_\emptyset}D_i^2\right]<\infty.$$
Independence is assumed between $(Z_{\emptyset,s})_{s\geq0}$ and $A_\emptyset$ and $(D_i)_{i=1}^{N_\emptyset}$ (but the $D_i$ need not be independent of each other nor of $N_\emptyset$). All particles independently follow the initial particle's behaviour.

To denote particles we follow standard notation. Let
$$
\mathcal{T}=\bigcup_{n\in\mathbb{N}\cup\{0\}}\mathbb{N}^n.
$$
Here $\mathbb{N}^0=\{\emptyset\}$ contains the initial particle. For $v=(v_1,..,v_n)\in\mathcal{T}$ and $i\in\mathbb{N}$ write $vi=(v_1,..,v_n,i)$, where $v$ is the parent of $vi$. To describe genealogical relationships, the set $\mathcal{T}$ is endowed with a partial ordering $\prec$, defined by
$$
(u_i)_{i=1}^m\prec(v_i)_{i=1}^n\iff m<n \text{ and }(u_i)_{i=1}^m=(v_i)_{i=1}^m.
$$
Write $\preceq$ for $\prec$ or $=$.

Now let
$$
\left[(Z_{v,s})_{s\geq0},A_v,(D_{vi})_{i=1}^{N_v}\right]
$$
for $v\in\mathcal{T}$ be i.i.d. copies of
$$
\left[(Z_{\emptyset,s})_{s\geq0},A_\emptyset,(D_i)_{i=1}^{N_\emptyset}\right].
$$
The set of all particles to ever exist is
$$
\mathcal{T}^*=\left\{(v_i)_{i=1}^n\in\mathcal{T}:v_{m+1}\leq N_{(v_i)_{i=1}^m},\text{ for }m=0,1,..,n-1\right\}.$$
The particles alive at time $t\geq0$ are
$$
\mathcal{T}_t=\left\{v\in\mathcal{T}^*:\sum_{u\prec v}A_u\leq t<\sum_{u\preceq v}A_u\right\}.
$$
Particle $v$ at time $t$, if it is alive, has position
$$
X_{v,t}=\sum_{\emptyset\prec u\preceq v}D_u+\sum_{\emptyset\preceq u\prec v}Z_{u,A_u}+Z_{v,t-\sum_{\emptyset\preceq u\prec v}A_u}.
$$
For further notation, the branching rate is $$\lambda=\mathbb{E}[A_\emptyset]^{-1},$$ the effective branching rate is $$\hat{\lambda}=\lambda\mathbb{E}[N_\emptyset-1],$$ and the movement rate is
$$
r=\mathbb{E}[Z_{\emptyset,1}]+\lambda\mathbb{E}\left[\sum_{i=1}^{N_\emptyset}D_i\right].
$$
\section{Main result}\label{sec:main}
\begin{theorem}\label{mr}Conditional on the event $\{\lim_{t\rightarrow\infty}|\mathcal{T}_t|=\infty\}$, the limit
$$
\lim_{t\rightarrow\infty}\frac{1}{|\mathcal{T}_t|}\left(\sum_{v\in\mathcal{T}_t}X_{v,t}-rt\right)
$$
exists and is finite almost surely.
\end{theorem}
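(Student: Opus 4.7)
Set $M_t = |\mathcal{T}_t|$ and $S_t = \sum_{v \in \mathcal{T}_t} X_{v,t}$, and let $(\mathcal{F}_t)$ denote the natural filtration of the process. The plan is to introduce the two processes
$$
W_t := e^{-\hat\lambda t} M_t, \qquad L_t := e^{-\hat\lambda t}\bigl(S_t - rt\, M_t\bigr),
$$
show that both are $(\mathcal{F}_t)$-martingales which converge almost surely and in $L^2$ to finite limits $W_\infty, L_\infty$, and then rewrite
$$
\frac{1}{M_t}\Bigl(\sum_{v \in \mathcal{T}_t} X_{v,t} - rt\Bigr) = \frac{L_t}{W_t} \xrightarrow[t \to \infty]{\mathrm{a.s.}} \frac{L_\infty}{W_\infty}.
$$
The process $W_t$ is the classical additive martingale of the embedded Galton--Watson tree; under $\mathbb{E}[N_\emptyset^2]<\infty$ it is $L^2$-bounded, and by Kesten--Stigum $\{W_\infty > 0\}=\{\lim_t M_t = \infty\}$ almost surely, so the ratio above is a.s.\ finite on the conditioning event.

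For $L_t$, I would first derive the first-moment identity $\mathbb{E}[S_t] = rt\, e^{\hat\lambda t}$ from the ODE
$$
\tfrac{d}{dt}\mathbb{E}[S_t] = r\, \mathbb{E}[M_t] + \hat\lambda\, \mathbb{E}[S_t],
$$
whose right-hand side records the Lévy drift $\mathbb{E}[Z_{\emptyset,1}]$ per living particle, the mean displacement $\lambda\, \mathbb{E}[\sum_i D_i]$ per particle per unit time produced at branchings, and the growth $\hat\lambda$ of population size. Conditioning on $\mathcal{F}_s$ and using the branching property (each $v \in \mathcal{T}_s$ seeds an independent copy of the process at $X_{v,s}$) gives
$$
\mathbb{E}[S_t \mid \mathcal{F}_s] = e^{\hat\lambda(t-s)}\bigl(S_s + r(t-s)\, M_s\bigr),
$$
and combining this with $\mathbb{E}[M_t \mid \mathcal{F}_s] = e^{\hat\lambda(t-s)} M_s$ yields $\mathbb{E}[L_t \mid \mathcal{F}_s] = L_s$.

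The core analytical step is $L^2$-boundedness of $L_t$, which I would obtain by computing its predictable quadratic variation $\langle L\rangle$. There are two sources: the continuous Lévy motion of each living particle, contributing $\sigma_Z^2\, e^{-2\hat\lambda s}\, M_s\, ds$ with $\sigma_Z^2 = \mathrm{Var}(Z_{\emptyset,1})$, and the branching jumps, whose conditional variance at a branching of particle $v$ equals
$$
e^{-2\hat\lambda s}\, \mathrm{Var}\Bigl((N_\emptyset - 1)(X_{v,s^-} - rs) + \sum_{i=1}^{N_\emptyset} D_i\;\Big|\; X_{v,s^-}\Bigr).
$$
Expanding this conditional variance and taking expectation over $\mathcal{T}_s$ reduces everything to the spatial second-moment estimate
$$
Q_s := \mathbb{E}\Bigl[\sum_{v \in \mathcal{T}_s}(X_{v,s} - rs)^2\Bigr] = O\bigl(s\, e^{\hat\lambda s}\bigr),
$$
which follows by solving the linear ODE $Q'_s = \hat\lambda Q_s + \bigl(\lambda\, \mathbb{E}[\sum_i D_i^2] + \sigma_Z^2\bigr)\, e^{\hat\lambda s}$ with $Q_0 = 0$. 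I expect this bookkeeping for $\langle L\rangle$ to be the main obstacle: the jump terms couple $X_{v,s^-}$ quadratically to the possibly correlated pair $(N_\emptyset, \sum_i D_i)$, and one must track precisely how the three hypotheses $\mathbb{E}[N_\emptyset^2]<\infty$, $\mathbb{E}[(\sum_i D_i)^2]<\infty$ and $\mathbb{E}[\sum_i D_i^2]<\infty$ each enter the estimate. Once $\mathbb{E}[\langle L\rangle_\infty]<\infty$, the martingale $L_t$ converges a.s.\ and in $L^2$, and the conclusion follows from the ratio display in the first paragraph.
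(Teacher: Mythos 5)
Your plan is correct and follows essentially the same route as the paper: the process $e^{-\hat\lambda t}\sum_{v\in\mathcal{T}_t}(X_{v,t}-rt)$ is exactly the martingale of Lemma \ref{mart}, its $L^2$-boundedness plays the role of Lemma \ref{bsm}, and the final step of dividing by the Kesten--Stigum martingale $e^{-\hat\lambda t}|\mathcal{T}_t|$ is the paper's concluding argument. The only divergence is in how the second moment is controlled: you propose the predictable quadratic variation of $L$ together with the auxiliary sum-of-squares quantity $Q_s=\mathbb{E}[\sum_{v}(X_{v,s}-rs)^2]=O(se^{\hat\lambda s})$, whereas the paper derives a closed-form ODE directly for $\mathbb{E}[(\sum_{v}(X_{v,t}-rt))^2]$ by conditioning on whether branching occurs in a small interval $[0,h]$; both computations are correct (the cross terms you worry about vanish because $\mathbb{E}[\sum_v(X_{v,s}-rs)]=0$ and $r=\mathbb{E}[Z_{\emptyset,1}]+\lambda\mathbb{E}[\sum_i D_i]$), and each uses the three moment hypotheses in the same places.
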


\section{Proof of Theorem \ref{mr}} \label{sec:proof}
Our proof will involve conditioning on whether branching occurs during the time interval $[0,h]$ for some small $h>0$. Write
$$
J_{0,h}=\{A_\emptyset>h\}
$$
for the event that the first branching occurs after time $h$. Write
$$
J_{1,h}=\left\{A_\emptyset\leq h<A_\emptyset+\min_{i=1,..,N_\emptyset} A_i\right\}
$$
for the event that the first branching occurs before time $h$ and the second branching occurs after time $h$.
Write
$$
J_{2,h}=\left\{A_\emptyset+\min_{i=1,..,N_\emptyset} A_i\leq h\right\}.
$$
for the event that the second branching occurs before time $h$.
Note the probabilities
$$
\begin{cases}
\mathbb{P}[J_{0,h}]=1-h\lambda+o(h)\\
\mathbb{P}[J_{1,h}]=h\lambda+o(h)\\
\mathbb{P}[J_{2,h}]=o(h),
\end{cases}
$$
as $h\downarrow0$.
Observe the conditional distribution
\begin{equation}\label{cd1}
\left(\sum_{v\in\mathcal{T}_{t+h}}\left(X_{v,t+h}-r(t+h)\right)\big|J_{0,h}\right)\overset{d}{=}\sum_{v\in\mathcal{T}_t'}(Z_{\emptyset,h}+X_{v,t}'-r(t+h)),
\end{equation}
where $(X_{v,t}')_{v\in\mathcal{T}_t'}\overset{d}{=}(X_{v,t})_{v\in\mathcal{T}_t}$, and $(X_{v,t}')_{v\in\mathcal{T}_t'}$ is independent of $Z_{\emptyset,h}$. Meanwhile
\begin{equation}\label{cd2}
\left(\sum_{v\in\mathcal{T}_{t+h}}\left(X_{v,t+h}-r(t+h)\right)\big|J_{1,h}\right)\overset{d}{=}\sum_{i=1}^{N_\emptyset}\sum_{v\in\mathcal{T}^i_t}(D_i+X^i_{v,t}-rt)+\eta_h,
\end{equation}
where $(X^i_{v,t})_{v\in\mathcal{T}_t^i}\overset{d}{=}(X_{v,t})_{v\in\mathcal{T}_t}$ for $i=1,..,N_\emptyset$; the $(X^i_{v,t})_{v\in\mathcal{T}_t^i}$ are independent of each other and of $\left(D_i\right)_{i=1}^{N_\emptyset}$; and
$$\eta_{h}=\left(\sum_{i=1}^{N_{\emptyset}}|\mathcal{T}_t^i|(Z_{\emptyset,A_{\emptyset}} + Z_{i,h-A_{\emptyset}}-rh)|J_{1,h}\right).$$
Straightforward calculations show that the first and second moments of $\eta_h$ converge to $0$ as $h\downarrow0$.
\begin{lem}\label{zeroexp}For $t\geq0$,
$$
\mathbb{E}\left[\sum_{v\in\mathcal{T}_t}\left(X_{v,t}-rt\right)\right]=0.
$$
\begin{proof}
From (\ref{cd1}),
$$
\mathbb{E}\left[\sum_{v\in\mathcal{T}_{t+h}}\left(X_{v,t+h}-r(t+h)\right)|J_{0,h}\right]=\mathbb{E}\left[\sum_{v\in\mathcal{T}_t}\left(X_{v,t}-rt\right)\right]+h\left(\mathbb{E}[Z_{\emptyset,1}]-r\right)\mathbb{E}|\mathcal{T}_t|.
$$
From (\ref{cd2}),
\begin{align*}
\mathbb{E}\left[\sum_{v\in\mathcal{T}_{t+h}}\left(X_{v,t+h}-r(t+h)\right)|J_{1,h}\right]=&\mathbb{E}[N_\emptyset ]\mathbb{E}\left[\sum_{v\in\mathcal{T}_t}\left(X_{v,t}-rt\right)\right]\\&+\mathbb{E}\left[\sum_{i=1}^{N_\emptyset} D_i\right]\mathbb{E}|\mathcal{T}_t|+o(1).
\end{align*}
Taking the unconditional expectation,
\begin{align*}
\mathbb{E}\left[\sum_{v\in\mathcal{T}_{t+h}}\left(X_{v,t+h}-r(t+h)\right)\right]  =& (1-h\lambda)\mathbb{E}\left[\sum_{v\in\mathcal{T}_{t+h}}\left(X_{v,t+h}-r(t+h)\right)|J_{0,h}\right]\\
&+h\lambda\mathbb{E}\left[\sum_{v\in\mathcal{T}_{t+h}}\left(X_{v,t+h}-r(t+h)\right)|J_{1,h}\right]
+o(h)\\
=&\mathbb{E}\left[\sum_{v\in\mathcal{T}_t}\left(X_{v,t}-rt\right)\right](1+h\hat{\lambda})+o(h).
\end{align*}
Rearranging and taking $h\downarrow0$,
\begin{eqnarray*}
\frac{d}{dt}\mathbb{E}\left[\sum_{v\in\mathcal{T}_t}\left(X_{v,t}-rt\right)\right]=\hat{\lambda}\mathbb{E}\left[\sum_{v\in\mathcal{T}_t}\left(X_{v,t}-rt\right)\right].
\end{eqnarray*}
The statement of the lemma for any $t$ now follows from the above and the fact that it clearly holds for $t=0$.
\end{proof}
\end{lem}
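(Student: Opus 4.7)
Let $f(t) := \mathbb{E}\bigl[\sum_{v\in\mathcal{T}_t}(X_{v,t}-rt)\bigr]$. The strategy, given the conditional distributions (\ref{cd1}) and (\ref{cd2}) already at hand, is to derive a linear ODE for $f$ by expanding $f(t+h)$ through the partition $\{J_{0,h}, J_{1,h}, J_{2,h}\}$ and then integrating with the initial condition $f(0)=0$ (which holds because $\mathcal{T}_0=\{\emptyset\}$ and $X_{\emptyset,0}=0$).

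On $J_{0,h}$, by (\ref{cd1}) and the independence of $Z_{\emptyset,h}$ from the i.i.d.\ copy of the tree, the conditional expectation of $\sum_{v}(X_{v,t+h}-r(t+h))$ becomes $f(t) + h(\mathbb{E}[Z_{\emptyset,1}]-r)\mathbb{E}|\mathcal{T}_t|$, using $\mathbb{E}[Z_{\emptyset,h}]=h\mathbb{E}[Z_{\emptyset,1}]$. On $J_{1,h}$, by (\ref{cd2}) together with the stated vanishing of the first moment of $\eta_h$, it becomes $\mathbb{E}[N_\emptyset]\, f(t) + \mathbb{E}\bigl[\sum_i D_i\bigr]\mathbb{E}|\mathcal{T}_t| + o(1)$. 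Weighting by the three probabilities and invoking the definition $r = \mathbb{E}[Z_{\emptyset,1}] + \lambda \mathbb{E}[\sum_i D_i]$, the terms proportional to $\mathbb{E}|\mathcal{T}_t|$ should cancel exactly, leaving $f(t+h) = f(t)\bigl(1 + h\lambda(\mathbb{E}[N_\emptyset]-1)\bigr) + o(h) = f(t)(1 + h\hat{\lambda}) + o(h)$. Dividing by $h$ and sending $h\downarrow 0$ gives $f'(t) = \hat{\lambda} f(t)$, and uniqueness of the solution to this linear ODE with $f(0)=0$ forces $f\equiv 0$.

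The main obstacle will be making the $o(h)$ bookkeeping honest. Specifically, one needs $\mathbb{E}|\mathcal{T}_t|<\infty$ for each $t\geq 0$ (standard from $\mathbb{E}[N_\emptyset^2]<\infty$ via the Yule-type bound $\mathbb{E}|\mathcal{T}_t|=e^{\hat\lambda t}$), uniform boundedness of the conditional expectation on $J_{2,h}$ so that $\mathbb{P}[J_{2,h}] = o(h)$ is enough to kill its contribution, and the assertion $\mathbb{E}[\eta_h]\to 0$ — which reduces, via the tower property and the decomposition of $Z_{\emptyset,A_\emptyset}+Z_{i,h-A_\emptyset}$, to the moment hypotheses on $Z_{\emptyset,1}$, $N_\emptyset$, and $\sum_i D_i$. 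A minor regularity check is that $f$ is genuinely differentiable rather than only right-differentiable, but this falls out of the two-sided nature of the expansion once it is established.
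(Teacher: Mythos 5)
Your proposal is correct and follows essentially the same route as the paper: condition on $J_{0,h}$, $J_{1,h}$, $J_{2,h}$, use (\ref{cd1}) and (\ref{cd2}) to compute the conditional expectations, observe that the $\mathbb{E}|\mathcal{T}_t|$ terms cancel by the definition of $r$, and solve the resulting ODE $f'=\hat{\lambda}f$ with $f(0)=0$. Your closing remarks on integrability, the negligibility of the $J_{2,h}$ contribution, and two-sided differentiability are points the paper leaves implicit, but they do not alter the argument.
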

Next we determine second moments.
\begin{lem}\label{bsm}For $t\geq0$,
$$
\mathbb{E}\left[\left(\sum_{v\in\mathcal{T}_t}(X_{v,t}-rt)\right)^2\right]=c_1e^{2\hat{\lambda}t}-c_2te^{\hat{\lambda}t}-c_1e^{\hat{\lambda}t},$$
where
$$c_1=\frac{\mathbb{E}[(N_\emptyset-1)^2]}{\mathbb{E}[N_\emptyset-1]^2}\mathbb{E}\left[\sum_{i=1}^{N_\emptyset}D_i^2\right]+\frac{1}{\mathbb{E}[N_\emptyset-1]}\mathbb{E}\left[\left(\sum_{i=1}^{N_\emptyset}D_i\right)^2\right]$$
and
$$
c_2=\hat{\lambda}\frac{\mathbb{E}[(N_\emptyset-1)^2]}{\mathbb{E}[N_\emptyset-1]^2}\mathbb{E}\left[\sum_{i=1}^{N_\emptyset}D_i^2\right].
$$
\end{lem}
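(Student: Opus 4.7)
The plan is to derive a first-order linear ODE for $M_t := \mathbb{E}[(\sum_{v \in \mathcal{T}_t}(X_{v,t}-rt))^2]$ by the same conditioning on $J_{0,h}, J_{1,h}, J_{2,h}$ as in Lemma \ref{zeroexp}, and to solve it explicitly. Squaring the right-hand sides of (\ref{cd1}) and (\ref{cd2}) introduces two auxiliary quantities beyond $M_t$ itself: the second moment $V_t := \mathbb{E}|\mathcal{T}_t|^2$ of the population size, and the cross-moment $C_t := \mathbb{E}[|\mathcal{T}_t| \sum_{v \in \mathcal{T}_t}(X_{v,t}-rt)]$. The $\eta_h$ contribution to $\mathbb{E}[\,\cdot\,|J_{1,h}]$ is $o(1)$ by Cauchy--Schwarz and the stated decay of the first two moments of $\eta_h$, hence $o(h)$ once multiplied by $\mathbb{P}[J_{1,h}] \sim h\lambda$, and the $J_{2,h}$ contribution is $o(h)$ trivially.

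From (\ref{cd1}), squaring yields $M_t + \mathbb{E}[(Z_{\emptyset,h}-rh)^2]\,V_t + 2\,\mathbb{E}[Z_{\emptyset,h}-rh]\,C_t$. From (\ref{cd2}), expanding the square of $\sum_i(|\mathcal{T}_t^i| D_i + \sum_{v}(X_{v,t}^i - rt))$ and using conditional independence of the subtrees given $N_\emptyset$ and $(D_i)_{i=1}^{N_\emptyset}$, together with Lemma \ref{zeroexp}, I obtain $\mathbb{E}[N_\emptyset]\,M_t + 2\,\mathbb{E}[\sum_i D_i]\,C_t + V_t\,\mathbb{E}[\sum_i D_i^2] + m_t^2\,(\mathbb{E}[(\sum_i D_i)^2] - \mathbb{E}[\sum_i D_i^2])$, where $m_t := \mathbb{E}|\mathcal{T}_t| = e^{\hat\lambda t}$. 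Assembling via $M_{t+h} = (1-h\lambda)\,\mathbb{E}[\,\cdot\,|J_{0,h}] + h\lambda\,\mathbb{E}[\,\cdot\,|J_{1,h}] + o(h)$ and sending $h \downarrow 0$, the coefficient of $C_t$ collapses to $2(\mathbb{E}[Z_{\emptyset,1}] - r + \lambda\,\mathbb{E}[\sum_i D_i])$, which vanishes by the very definition of $r$. This cancellation is the crux of the calculation: it decouples $M_t$ from the unknown $C_t$ and hands us an autonomous ODE.

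What remains is $M_t' = \hat\lambda M_t + \phi(t)$, where $\phi(t)$ depends only on $m_t$ and $V_t$. The quantity $V_t$ is in turn found by the same conditioning argument applied to $\mathbb{E}|\mathcal{T}_t|^2$: $V_t' = \hat\lambda V_t + \lambda\,\mathbb{E}[N_\emptyset(N_\emptyset-1)]\,e^{2\hat\lambda t}$, with solution $V_t = \alpha e^{2\hat\lambda t} + (1-\alpha)e^{\hat\lambda t}$ and $\alpha = \mathbb{E}[N_\emptyset(N_\emptyset-1)]/\mathbb{E}[N_\emptyset-1]$. Substituting gives $\phi(t) = A e^{2\hat\lambda t} + B e^{\hat\lambda t}$ for explicit constants $A, B$; integrating with integrating factor $e^{-\hat\lambda t}$ and the initial condition $M_0 = 0$ produces $M_t = (A/\hat\lambda)\,e^{2\hat\lambda t} + B\,t\,e^{\hat\lambda t} - (A/\hat\lambda)\,e^{\hat\lambda t}$.

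The only real obstacle is purely algebraic: identifying $A/\hat\lambda$ with $c_1$ and $-B$ with $c_2$. This uses the identity $\mathbb{E}[N_\emptyset(N_\emptyset-1)] = \mathbb{E}[(N_\emptyset-1)^2] + \mathbb{E}[N_\emptyset-1]$ together with $\hat\lambda = \lambda\,\mathbb{E}[N_\emptyset-1]$ to cast the coefficients into the stated form.
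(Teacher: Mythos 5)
Your strategy coincides with the paper's: condition on $J_{0,h}$, $J_{1,h}$, $J_{2,h}$, observe that the coefficient of the cross-moment $C_t=\mathbb{E}[|\mathcal{T}_t|\sum_{v\in\mathcal{T}_t}(X_{v,t}-rt)]$ collapses to $2\left(\mathbb{E}[Z_{\emptyset,1}]-r+\lambda\mathbb{E}[\sum_i D_i]\right)=0$ by the definition of $r$, and solve the resulting autonomous linear ODE using the known formulas for $\mathbb{E}|\mathcal{T}_t|$ and $\mathbb{E}[|\mathcal{T}_t|^2]$. Your expansion of the $J_{1,h}$ branch, your ODE and solution for $V_t$, and your integration of $M_t'=\hat\lambda M_t+\phi(t)$ with $M_0=0$ all agree with the paper's proof.

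The step that does not close as you describe it is the final ``purely algebraic'' identification. Your $J_{0,h}$ expansion correctly contains the term $\mathbb{E}[(Z_{\emptyset,h}-rh)^2]\,V_t=h\,\mathrm{Var}(Z_{\emptyset,1})\,V_t+O(h^2)$, which is of order $h e^{2\hat\lambda t}$ and therefore feeds into $A$ and $B$: carrying it through with your $V_t=\alpha e^{2\hat\lambda t}+(1-\alpha)e^{\hat\lambda t}$ gives $A=\hat\lambda c_1+\mathrm{Var}(Z_{\emptyset,1})\,\alpha$ and $B=-c_2+\mathrm{Var}(Z_{\emptyset,1})(1-\alpha)$, so $A/\hat\lambda=c_1$ and $-B=c_2$ only when $\mathrm{Var}(Z_{\emptyset,1})=0$. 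You cannot simultaneously keep that term and land on the stated constants. For comparison, the paper's own proof writes the corresponding term $h\left(\mathbb{E}[Z_{\emptyset,1}^2]-\mathbb{E}[Z_{\emptyset,1}]^2\right)\mathbb{E}[|\mathcal{T}_t|^2]$ in its $J_{0,h}$ expansion and then omits it from the constants $a$ and $b$; a sanity check with a Yule process whose particles perform standard Brownian motion and with $D_i\equiv0$ (so $r=0$ and $c_1=c_2=0$) gives $\mathbb{E}\left[\left(\sum_{v\in\mathcal{T}_t}X_{v,t}\right)^2\right]\geq\mathbb{E}[Z_{\emptyset,t}^2]\,\mathbb{P}[A_\emptyset>t]=te^{-\lambda t}>0$, so the diffusive contribution genuinely belongs in the answer. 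To finish, you must either explain why that term vanishes (it does not in general) or accept that $c_1$ and $c_2$ acquire additional summands proportional to $\mathrm{Var}(Z_{\emptyset,1})$; note that the $O(e^{2\hat\lambda t})$ bound needed for the $L^2$ martingale argument in Theorem \ref{mr} survives either way.
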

\begin{proof}
From (\ref{cd1}),
\begin{align*}
\mathbb{E}\left[\left(\sum_{v\in\mathcal{T}_{t+h}}(X_{v,t+h}-r(t+h))\right)^2\Big| J_{0,h}\right]=&\mathbb{E}\left[\left(\sum_{v\in\mathcal{T}_t}(X_{v,t}-rt)\right)^2\right]\\&+2h\left(\mathbb{E}[Z_{\emptyset,1}]-r\right)\mathbb{E}\left[|\mathcal{T}_t|\sum_{v\in\mathcal{T}_t}(X_{v,t}-rt)\right]\\&+h\left(\mathbb{E}[Z_{\emptyset,1}^2]-\mathbb{E}[Z_{\emptyset,1}]^2\right)\mathbb{E}[|\mathcal{T}_t|^2]\\&+o(h).
\end{align*}
From (\ref{cd2}) and Lemma \ref{zeroexp},
\begin{align*} \mathbb{E}\left[\left(\sum_{v\in\mathcal{T}_{t+h}}(X_{v,t+h}-r(t+h))\right)^2\Big| J_{1,h}\right] =&\mathbb{E}\left[\left(\sum_{i=1}^{N_\emptyset}\sum_{v\in\mathcal{T}^i_t}(X_{v,t}-rt)\right)^2\right]
\\&+2\mathbb{E}\left[\sum_{i=1}^{N_\emptyset}D_i|\mathcal{T}_t^i|\sum_{v\in\mathcal{T}^i_t}(X_{v,t}-rt)\right]\\ & +2\mathbb{E}\left[\sum_{\substack{i,j=1\\i\not=j}}^{N_\emptyset}D_i|\mathcal{T}_t^i|\sum_{v\in\mathcal{T}^j_t}(X_{v,t}-rt)\right]\\&+\mathbb{E}\left[\left(\sum_{i=1}^{N_\emptyset}D_i|\mathcal{T}_t^i|\right)^2\right]+o(1)\\
=&\mathbb{E}[N_\emptyset]\mathbb{E}\left[\left(\sum_{v\in\mathcal{T}_t}(X_{v,t}-rt)\right)^2\right]\\&+2\mathbb{E}\left[\sum_{i=1}^{N_\emptyset}D_i\right]\mathbb{E}\left[|\mathcal{T}_t|\sum_{v\in\mathcal{T}_t}(X_{v,t}-rt)\right]\\
&+\mathbb{E}\left[\sum_{i=1}^{N_\emptyset}D_i^2\right]\left(\mathbb{E}\left[|\mathcal{T}_t|^2\right]-\left(\mathbb{E}|\mathcal{T}_t|\right)^2\right)\\&+\mathbb{E}\left[\left(\sum_{i=1}^{N_\emptyset}D_i\right)^2\right]\left(\mathbb{E}|\mathcal{T}_t|\right)^2+o(1).
\end{align*}

But $\mathbb{E}|\mathcal{T}_t|$ and $\mathbb{E}[|\mathcal{T}_t|^2]$ are standard knowledge \cite{athreya1972branching}:
$$
\mathbb{E}|\mathcal{T}_t|=e^{\hat{\lambda}t}
$$
and
$$
\mathbb{E}[|\mathcal{T}_t|^2]=\left(1+\frac{\mathbb{E}[(N_\emptyset-1)^2]}{\mathbb{E}[N_\emptyset-1]}\right)e^{2\hat{\lambda}t}-\frac{\mathbb{E}[(N_\emptyset-1)^2]}{\mathbb{E}[N_\emptyset-1]}e^{\hat{\lambda}t}.
$$
Therefore
\begin{align*}
 &\mathbb{E}\left[\left(\sum_{v\in\mathcal{T}_{t+h}}(X_{v,t+h}-r(t+h))\right)^2\right] \\&=(1-\lambda h)\mathbb{E}\left[\left(\sum_{v\in\mathcal{T}_{t+h}}(X_{v,t+h}-r(t+h))\right)^2\Big| J_{0,h}\right]\\ &\quad +h\lambda\mathbb{E}\left[\left(\sum_{v\in\mathcal{T}_{t+h}}(X_{v,t+h}-r(t+h))\right)^2\Big| J_{1,h}\right]+o(h)\\
&=1+h\hat{\lambda}\mathbb{E}\left[\left(\sum_{v\in\mathcal{T}^i_t}(X_{v,t}-rt)\right)^2\right]\\&\quad+h ae^{2\hat{\lambda} t}+hb e^{\hat{\lambda} t}+o(h),
\end{align*}
where
$$
a=\lambda\left(\frac{\mathbb{E}[(N_\emptyset-1)^2]}{\mathbb{E}[N_\emptyset-1]}\mathbb{E}\left[\sum_{i=1}^{N_\emptyset}D_i^2\right]+\mathbb{E}\left[\left(\sum_{i=1}^{N_\emptyset}D_i\right)^2\right]\right)
$$
and
$$
b=-\lambda\frac{\mathbb{E}[(N_\emptyset-1)^2]}{\mathbb{E}[N_\emptyset-1]}\mathbb{E}\left[\sum_{i=1}^{N_\emptyset}D_i^2\right]
$$
Rearranging and taking $h\downarrow0$,
\begin{align*}
 \frac{d}{dt}\mathbb{E}\left[\left(\sum_{v\in\mathcal{T}^i_t}(X_{v,t}-rt)\right)^2\right]  = \hat{\lambda}\mathbb{E}\left[\left(\sum_{v\in\mathcal{T}^i_t}(X_{v,t}-rt)\right)^2\right]+ae^{2\hat{\lambda}t}+be^{\hat{\lambda}t}.
\end{align*}
The statement of Lemma \ref{bsm} now follows directly from the differential equation above.
\end{proof}
Next we present a martingale result for which a filtration $(\mathcal{F}_t)_{t\geq0}$ needs to be defined:
$$\mathcal{F}_t=\sigma\left((X_{v,s})_{v\in\mathcal{T}_s}:0\leq s\leq t\right).$$
\begin{lem}\label{mart}$$\left(e^{-\hat{\lambda} t}\sum_{v\in\mathcal{T}_t}\left(X_{v,t}-rt\right)\right)_{t\geq0}$$
is a martingale with respect to $(\mathcal{F}_t)_{t\geq0}$.
\end{lem}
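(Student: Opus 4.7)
The plan is to exploit the branching (Markov) property: conditional on $\mathcal{F}_t$, for each particle $v\in\mathcal{T}_t$ the subtree of descendants of $v$, with positions shifted by $-X_{v,t}$ and with a fresh exponential clock (justified by memorylessness), evolves as an independent copy of the branching L\'evy process starting from~$0$. Combining this with Lemma~\ref{zeroexp} applied to each subtree should immediately give the martingale identity.

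First, I would verify integrability. By Cauchy--Schwarz and Lemma~\ref{bsm}, $\mathbb{E}[M_t^2]$ stays bounded uniformly in $t\geq 0$ (indeed it converges to $c_1$), so in particular $M_t\in L^1$ for every $t\geq 0$.

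For the main step, fix $s,t\geq 0$ and, writing $\mathcal{T}^v_{t+s}$ for the set of descendants of $v\in\mathcal{T}_t$ (including $v$ itself if still alive) that are alive at time $t+s$, I would decompose
\[
\sum_{u\in\mathcal{T}_{t+s}}\bigl(X_{u,t+s}-r(t+s)\bigr)=\sum_{v\in\mathcal{T}_t}\left[\sum_{u\in\mathcal{T}^v_{t+s}}\bigl(X_{u,t+s}-X_{v,t}-rs\bigr)+|\mathcal{T}^v_{t+s}|\bigl(X_{v,t}-rt\bigr)\right].
\]
By the branching property, conditional on $\mathcal{F}_t$ the quantities appearing inside the outer sum are, across distinct $v\in\mathcal{T}_t$, independent, and for each $v$ the bracketed expression is distributed as $\sum_{u\in\mathcal{T}_s}(X_{u,s}-rs)+|\mathcal{T}_s|(X_{v,t}-rt)$ with $X_{v,t}$ treated as an $\mathcal{F}_t$-measurable constant. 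Taking $\mathbb{E}[\,\cdot\mid\mathcal{F}_t]$, invoking Lemma~\ref{zeroexp} to kill the first summand and using $\mathbb{E}|\mathcal{T}_s|=e^{\hat{\lambda}s}$ yields
\[
\mathbb{E}\!\left[\sum_{u\in\mathcal{T}_{t+s}}\bigl(X_{u,t+s}-r(t+s)\bigr)\,\Big|\,\mathcal{F}_t\right]=e^{\hat{\lambda}s}\sum_{v\in\mathcal{T}_t}\bigl(X_{v,t}-rt\bigr),
\]
and multiplying through by $e^{-\hat{\lambda}(t+s)}$ gives $\mathbb{E}[M_{t+s}\mid\mathcal{F}_t]=M_t$.

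The main (and really only nontrivial) point is the clean invocation of the branching property: one needs the memoryless property of the exponential lifetimes, so that the residual lifetime of each $v\in\mathcal{T}_t$ is again exponential with rate $\lambda$ and independent of $\mathcal{F}_t$, together with the stationary-independent-increments structure of the driving L\'evy process, so that the future displacement of each particle is a fresh L\'evy process independent of $\mathcal{F}_t$. These are standard in this setting, but they are what make the conditional distributional identity above legitimate and hence the whole argument go through.
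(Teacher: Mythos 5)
Your proposal is correct and follows essentially the same route as the paper: the identical decomposition of $\sum_{u\in\mathcal{T}_{t+s}}(X_{u,t+s}-r(t+s))$ over the subtrees rooted at the particles alive at time $t$, followed by the branching property, Lemma \ref{zeroexp}, and $\mathbb{E}|\mathcal{T}_s|=e^{\hat\lambda s}$. Your explicit integrability check via Lemma \ref{bsm} and your remark on the memorylessness of lifetimes and the Markov property of the driving L\'evy process make explicit points the paper leaves implicit, but the argument is the same.
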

\begin{proof}
Write
$$
\mathcal{T}_{u,t}=\{v\in\mathcal{T}_t:u\preceq v\}
$$
for the particles alive at time $t$ which are descendants of $u\in\mathcal{T}$. Let $0\leq s\leq t$. Then
\begin{align*}
e^{-\hat{\lambda} t}\sum_{v\in\mathcal{T}_t}\left(X_{v,t}-rt\right)=&e^{-\hat{\lambda} t}\sum_{u\in\mathcal{T}_s}\sum_{v\in\mathcal{T}_{u,t}}\left(X_{v,t}-X_{u,s}-r(t-s)\right)\\&+e^{-\hat{\lambda} t}\sum_{u\in\mathcal{T}_s}|\mathcal{T}_{u,t}|\left(X_{u,s}-rs\right).
\end{align*}
Taking conditional expectations,
\begin{align*}
& \mathbb{E}\left[e^{-\hat{\lambda} t}\sum_{v\in\mathcal{T}_t}\left(X_{v,t}-rt\right)\big|\mathcal{F}_s\right]\\ &= e^{-\hat{\lambda} t}|\mathcal{T}_s|\mathbb{E}\left[\sum_{v\in\mathcal{T}_{t-s}}(X_{v,t-s}-r(t-s))\right]+e^{-\hat{\lambda} t}\sum_{u\in\mathcal{T}_s}e^{\hat{\lambda}(t-s)}(X_{u,s}-rs)\\
&=e^{-\hat{\lambda} s}\sum_{u\in\mathcal{T}_s}(X_{u,s}-rs),
\end{align*}
where the last equality is due to Lemma \ref{zeroexp}.
\end{proof}
\begin{proof}[Proof of Theorem \ref{mr}]
By Lemmas \ref{bsm} and \ref{mart} and the martingale convergence theorem, there is a $\mathbb{R}$-valued random variable $V$ with
\begin{equation}\label{mcr}
\lim_{t\rightarrow\infty}e^{-\hat{\lambda} t}\sum_{v\in\mathcal{T}_t}\left(X_{v,t}-rt\right)=V
\end{equation}
almost surely. But conditioned on the event $\{\lim_{t\rightarrow\infty}|\mathcal{T}_t|=\infty\}$, there is a positive random variable $W$ with
\begin{equation}\label{anco}
\lim_{t\rightarrow\infty}e^{-\hat{\lambda} t}|\mathcal{T}_t|=W
\end{equation}
almost surely  \cite{athreya1972branching}. Combine (\ref{mcr}) and (\ref{anco}) to conclude the proof.
\end{proof}

\section*{Acknowledgments}
The authors are grateful to Ken Duffy for a number of useful discussions and to the anonymous referee for a number of useful comments on the first version of our paper.


\end{document}